\newtheorem{theorem}{Theorem}[section]
\newtheorem{lemma}[theorem]{Lemma}
\newtheorem{definition}[theorem]{Definition}
\theoremstyle{remark}
\newtheorem{remark}[theorem]{Remark}
\newcommand{\R}{\mathbb{R}}
\newcommand{\T}{\mathbb{T}^3}
\newcommand{\dive}{\mathop{\mathrm {div}}}
\newcommand{\pt}{\partial_t}
\newcommand{\rrho}{\sqrt{\rho}}
\newcommand{\la}{\lambda}
\newcommand{\be}{\beta\left(\frac{|w_{n}|^2}{2}\right)}
\newcommand{\bep}{\beta'\left(\frac{|w_{n}|^2}{2}\right)}
\newcommand{\bedp}{\beta''\left(\frac{|w_{n}|^2}{2}\right)}
\newcommand{\rn}{\rho_n}
\newcommand{\rrn}{\sqrt{\rho_n}}
\newcommand{\un}{u_{n}}
\newcommand{\wn}{w_{n}}
\def\d{{\partial}}
\DeclareMathOperator{\diver}{div}
\DeclareMathOperator{\symmD}{D}
\numberwithin{equation}{section}
\subjclass[2010]{Primary: 35Q35, Secondary: 35D05, 76N10.}
\keywords{Compressible Fluids, Quantum Navier-Stokes, Vacuum, Compactness.}
\begin{document}

\title[Quantum Navier-Stokes]{On the compactness of finite energy weak solutions to the Quantum Navier-Stokes equations}

\author[P. Antonelli]{Paolo Antonelli}
\address[P. Antonelli]{\newline
GSSI - Gran Sasso Science Institute \\ Viale Francesco Crispi 7\\67100, L'Aquila \\Italy}
\email[]{\href{paolo.antonelli@gssi.infn.it}{paolo.antonelli@gssi.infn.it}}

\author[S. Spirito]{Stefano Spirito}
\address[S. Spirito]{\newline
GSSI - Gran Sasso Science Institute \\ Viale Francesco Crispi 7\\67100, L'Aquila \\Italy}
\email[]{\href{stefano.spirito@gssi.infn.it}{stefano.spirito@gssi.infn.it}}

\begin{abstract}
We consider the Quantum Navier-Stokes system in three space dimensions. We prove compactness of finite energy weak solutions for large initial data. 
The main novelties are that vacuum regions are included in the weak formulation and no extra terms, like damping or cold pressure, are considered in the equations in order to define the velocity field. Our argument uses an equivalent formulation of the system in terms of an effective velocity, in order to eliminate the third order terms in the new system. 
This will allow to obtain the same compactness properties as for the Navier-Stokes equations with degenerate viscosity.

\end{abstract}

\maketitle

\section{Introduction}
We study the quantum Navier-Stokes (QNS) system on $(0, T)\times\Omega$
\begin{align}
&\pt\rho+\dive(\rho u)=0,\label{eq:qns1}\\
&\pt(\rho u)+\dive(\rho u\otimes u)+\nabla\rho^{\gamma}-2\nu\dive(\rho Du)-2k^2\rho\nabla\left(\frac{\Delta\rrho}{\rrho}\right)=0,\label{eq:qns2}
\end{align}
with initial data
\begin{equation}\label{eq:id}
\begin{aligned}
\rho(0,x)&=\rho^0(x),\\
(\rho u)(0,x)&=\rho^0(x)u^0(x).
\end{aligned}
\end{equation} 
We deal with a three dimensional domain $\Omega$ which will be chosen to be either the whole space or a bounded domain with periodic boundary conditions, namely $\Omega=\R^3$ or $\Omega=\mathbb T^3$.
The unknowns $\rho, u$ represent the mass density and the velocity field, respectively, of the fluid, $\nu$ and $\kappa$ are positive constants and they are called the viscosity and the dispersive coefficients, respectively. 
In this paper we prove the compactness of bounded sets of finite energy weak solutions to \eqref{eq:qns1}-\eqref{eq:id}. 
Our result is new to our knowledge and is the first step towards the analysis of global in time finite energy weak solutions to the Cauchy problem \eqref{eq:qns1}-\eqref{eq:id}. The second step is the construction of a sequence of regular smooth solutions satisfying the a priori bounds stated in this paper. The analysis of the approximating sequence is very involved and it is performed in \cite{AS1}. Let us stress that the compactness result holds in a more general case than \cite{AS1}.  

Similar systems were extensively studied in the literature during the last years. In particular \eqref{eq:qns1}-\eqref{eq:qns2} belong to the more general class of the so called Navier-Stokes-Korteweg systems
\begin{align*}
&\d_t\rho+\diver(\rho u)=0\\
&\d_t(\rho u)+\diver(\rho u\otimes u)+\nabla p=\diver\mathbb S+\diver\mathbb K,
\end{align*}
where the viscosity stress tensor reads
\begin{equation*}
\mathbb S=\nu(\rho)\symmD u+\lambda(\rho)\diver u\mathbb I,
\end{equation*}
and the capillarity dispersive term is given by
\begin{equation*}
\diver\mathbb K=\nabla\left(\rho\diver(k(\rho)\nabla\rho)-\frac12(\rho k'(\rho)-k(\rho))|\nabla\rho|^2\right)-\diver(k(\rho)\nabla\rho\otimes\nabla\rho).
\end{equation*}
The dispersive tensor above has been derived in \cite{DS} to take into account capillarity phenomena in diffuse interfaces. For more details about capillary fluid and Navier-Stokes-Korteweg systems, we address the reader to the review \cite{BG} and references therein. It is straightforward to check that $\kappa(\rho)=\frac{\kappa}{4\rho}$, $\nu(\rho)=2\nu\rho$, $\lambda(\rho)=0$ give the QNS system \eqref{eq:qns1}-\eqref{eq:qns2}.

Furthermore, similar systems arise also in the description of quantum fluids. For example \eqref{eq:qns1}-\eqref{eq:qns2} with $\nu=0$ is the well known Quantum Hydrodynamics (QHD) model for superfluids \cite{LL}. Global existence of finite energy weak solutions for the QHD system has been studied in \cite{AM, AM2}. The viscous correction term in \eqref{eq:qns1}-\eqref{eq:qns2} has been derived in \cite{BM}, by closing the moments for a Wigner equation with a BGK term. For more details about the derivation of the QNS system we refer the reader to \cite{J2}.

In the study of the Cauchy problem \eqref{eq:qns1}-\eqref{eq:id} we need to overcome several mathematical difficulties. The former is that the viscosity coefficient is degenerate at vacuum. Even for the Navier-Stokes equations this case cannot be covered by the well-established theory by Lions and Feireisl, see \cite{F,L} and the recent approach in \cite{BJ}. 
Indeed, it is not possible to define the velocity field in the nodal region $\{\rho=0\}$. The compactness of solutions in this case is proved in \cite{MV} by using the Bresch-Desjardin entropy \cite{BD} and a logarithmic improvement on the integrability of $\rho|u|^2$, which is crucial to deal with the convective term in the vacuum region. Recently also the global existence of finite energy weak solutions has been obtained \cite{VY1, LX}. 
Furthermore, in the case of Quantum Navier-Stokes, the highly nonlinear third order dispersive term requires further regularity on the solutions to prove the compactness. 
Moreover, the difficulties about the convective term are the same as for the Navier-Stokes system with degenerate viscosity, but because of the dispersive term it is not possible to follow the approach in \cite{MV}. 
 
 J\"ungel in \cite{J} proves the global existence of finite energy weak solutions for \eqref{eq:qns1}-\eqref{eq:id} in the case when $\kappa>\nu$ and $\gamma>3$. By using an alternative formulation of the quantum Navier-Stokes system in terms of an effective velocity he is able to infer further a priori estimates on the second derivative for the mass density and to establish a maximum principle for the approximate solutions. However the notion of weak solutions in \cite{J} involves test functions of the type $\rho\phi$, with $\phi$ smooth and compactly supported. 
 This particular choice is due to the difficulty of defining the weak solutions in the nodal region, especially the convective term. This specific definition of weak solutions was introduced in \cite{BDL} to prove a global existence result for a Navier-Stokes-Korteweg system with a particular choice of viscosity and capillarity coefficients.
 
Considering the usual notion of weak solutions, some global existence results for \eqref{eq:qns1}-\eqref{eq:id} have been proved by augmenting the system with additional terms which allow to define the velocity field in the vacuum region. For example, in \cite{GLV} the authors consider a cold pressure term, which allows to estimate $1/\rho$ in some suitable Sobolev spaces, whereas in \cite{VY2} damping terms are added.

For the quantum Navier-Stokes system \eqref{eq:qns1}-\eqref{eq:id} it is possible to obtain a BD-type estimate, see for example \cite{J}. However a Mellet-Vasseur type estimate cannot be proven, due to the presence of the dispersive term. This is also remarked in \cite{VY1} where an approximate estimate of the type is proved, by exploiting the extra damping terms added to the system and a truncation technique for the mass density. 

In our paper we are going to prove a Mellet and Vasseur type estimate for weak solutions to \eqref{eq:qns1}-\eqref{eq:id} by using an alternative formulation of the system in terms of an effective velocity. Indeed, by choosing appropriately the constant in the effective velocity it is possible to tune the viscosity and capillarity coefficients. We choose it in such a way that the capillarity coefficient vanishes. This will yield the compactness for the effective velocity and consequently the same property will hold for weak solutions to \eqref{eq:qns1}-\eqref{eq:id}, by exploiting the BD entropy estimate.

Our paper is structured as follows.
In Section \ref{sect:defs}  we introduce the notations and definitions we are going to use in our paper, we then state our main result. Section \ref{sect:apriori} is devoted to the derivation of the a priori estimates which will then be used in Section \ref{sect:main} to show the convergence result of the main Theorem.
\section{Notations, Definitions and Main Result}\label{sect:defs}
In this section we are going to fix the notations used in the paper, to give the precise of definition of weak solution for the system \eqref{eq:qns1}-\eqref{eq:qns2} and to state our main result.\\

{\bf Notations}\\
\\
Given $\Omega\subset\R^3$, the space of compactly supported smooth functions will be $\mathcal{D}((0,T)\times\Omega)$. We will denote with $L^{p}(\Omega)$ the standard Lebesgue spaces and with $\|\cdot\|_p$ their norm. The Sobolev space of $L^{p}$ functions with $k$ distributional derivatives in $L^{p}$ is $W^{k,p}(\Omega)$ and in the case $p=2$ we will write $H^{k}(\Omega)$. The spaces $W^{-k,p}(\Omega)$ and $H^{-k}(\Omega)$ denote the dual spaces of $W^{k,p'}(\Omega)$ and $H^{k}(\Omega)$ where  $p'$ is the H\"older conjugate of $p$. Given a Banach space $X$ we use the the classical Bochner space for time dependent functions with value in $X$, namely $L^{p}(0,T;X)$, $W^{k,p}(0,T;X)$ and $W^{-k,p}(0,T;X)$. Finally, $Du=(\nabla u+(\nabla u)^T)/2$ is the symmetric part of the gradient.\\

{\bf Weak Solutions}\\
\\
Here and throughout the paper we will consider for simplicity the case $\Omega=\T$. The case $\Omega=\R^3$ can be considered with minor changes in the proofs.
We first recall two alternative ways to write the third order tensor term, which will be very useful in the sequel:
\begin{equation}\label{eq:quantum}
2\rho\nabla\left(\frac{\Delta\rrho}{\rrho}\right)=\dive(\rho\nabla^2\log\rho)=\nabla\Delta\rho-4\dive(\nabla\rrho\otimes\nabla\rrho).
\end{equation}
Then, by using \eqref{eq:quantum}, we can consider the following definition of weak solutions. 
\begin{definition}\label{def:ws}
A pair $(\rho, u)$ with $\rho\geq0$ is said to be a weak solution of the Cauchy problem \eqref{eq:qns1}-\eqref{eq:qns2}-\eqref{eq:id} if 
\begin{enumerate}
\item Integrability conditions:
\begin{align*}
&\rho\in L^{\infty}(0,T;L^{1}\cap L^{\gamma}(\T)),\\
&\rrho u\in L^{\infty}(0,T;L^{2}(\T)),\\
&\rrho\in L^{\infty}(0,T;H^{1}(\T)).\\
\end{align*}
\item Continuity equation:
\begin{equation*}
\int_\Omega \rho^0\phi(0)\,dx+\iint_0^T\int_\Omega\rho\phi_t+\rrho\rrho u\nabla\phi\,dxdt=0,
\end{equation*}
for any $\phi\in C_c^{\infty}([0,T);C^{\infty}(\T))$.\\
\item Momentum equation:
\begin{equation*}
\begin{aligned}
&\int_\T \rho^0u^0\phi(0)\,dx+\int_0^T\int_{\T}\rrho(\rrho u)\psi_t+\rrho u\otimes\rrho u\nabla\psi+\rho^{\gamma}\dive\psi\,dxdt\\
&-2\nu\int_0^T\int_{\T}(\rrho u\otimes\nabla\rrho)\nabla\psi\,dxdt-2\nu\int_0^T\int_{\T}(\nabla\rrho\otimes\rrho u)\nabla\psi\,dxdt\\
&-4\kappa^2\int_0^T\int_{\T}(\nabla\rrho\otimes\nabla\rrho)\nabla\psi\,dxdt+\kappa^2\int_0^T\int_{\T}\rho\Delta\nabla\phi\,dxdt\\
&+\nu\int_0^T\int_{\T}\rrho\rrho u\Delta\psi\,dxdt+\nu\int_0^T\int_{\T}\rrho\rrho u\nabla\dive\psi\,dxdt=0,
\end{aligned}
\end{equation*}
for any $\psi\in C_c^{\infty}([0,T);C^{\infty}(\T))$. 
\item Energy Inequality: if

\begin{equation*}
E(t)=\int\frac12\rho|u|^2+\frac{\rho^\gamma}{\gamma-1}+2\kappa^2|\nabla\sqrt{\rho}|^2\,dx,
\end{equation*}
then the following energy inequality is satisfied
\begin{equation*}
E(t)+2\nu\int_0^t\int_{\T}\rho|Du|^2\,dxdt'\leq E(0).
\end{equation*}
\end{enumerate}
\end{definition}
\bigskip
{\bf Main Result}\\
\\
Let us start by specifying the assumptions on the initial data. Let $\tau$ be a small fixed positive number. We consider  $\{\rho^0_n\}_{n}$ being a sequence of smooth and strictly positive functions and $\rho^0$ be a positive function such that 
\begin{equation}\label{eq:hyidr}
\begin{aligned}
&\rho_n^0>0,\quad \rho_n^0\rightarrow \rho\textrm{ strongly in }L^{1}(\T),\\
&\{\rho_n^0\}_{n}\textrm{ in uniformly in bounded in }L^{1}\cap L^{\gamma}(\T),\\
&\{\nabla\sqrt{\rho_n^0}\}_n\textrm{ is uniformly bounded in }L^{2}\cap L^{2+\tau}(\T),\\
&\nabla\sqrt{\rho_n^0}\rightarrow\nabla\sqrt{\rho^0}\textrm{ strongly in }L^{2}(\T).
\end{aligned}
\end{equation}
Regarding the initial velocity, let $\{u_0^{n}\}$ be a sequence of smooth vector fields such that 
\begin{equation}\label{eq:hyidu}
\begin{aligned}
&\{\sqrt{\rho_n^0}u_{n}^0\}\textrm{ is uniformly bounded in }L^{2}\cap L^{2+\tau}(\T),\\
&\rho_n^0 u_n^0\rightarrow \rho^0 u^0\textrm{ in }L^{1}(\R^3).\\ 
\end{aligned}
\end{equation}
The hypothesis of the higher integrability of $\nabla\sqrt{\rho_n^0}$ and $\sqrt{\rho_n^0}u_n^0$ imply that 
\begin{equation}\label{eq:mvi}
\rho_n^0\left(1+\frac{|w_n^0|^2}{2}\right)\log\left(1+\frac{|w_n^0|^2}{2}\right)\textrm{ is uniformly bounded in }L^{1}(\T),
\end{equation}
with $w_n^0=u_n^0+c\nabla\log\rho_n^0$ and $c>0$.\\

The main theorem of this paper is the following:
\begin{theorem}\label{teo:main}
Let $k<\nu$ and $1<\gamma<3$. Assume $\{\rn^0\}_n$ and $\{\rn^0\un^0\}_n$ are sequences of initial data for \eqref{eq:qns1}-\eqref{eq:qns2} satisfying \eqref{eq:hyidr} and \eqref{eq:hyidu}.\\
Let $\{(\rho_n, \un)\}_n$ be a sequence of smooth solutions of \eqref{eq:qns1}-\eqref{eq:qns2} with initial data $\{\rn^0\}_n$ and $\{\rn^0\un^0\}_n$ such that $\rn>0$. 
Then, up to subsequences not relabelled,  there exist $\rho$ and $u$ such that 
\begin{equation}\label{eq:main}
\begin{aligned}
&\nabla\rrn\rightarrow\nabla\rrho\textrm{ strongly in }L^{2}((0,T)\times\T),\\
&\rrn\un\rightarrow\rrho u\textrm{ strongly in }L^{2}((0,T)\times\T),\\
&\rn^{\gamma}\rightarrow\rho^{\gamma}\textrm{ strongly in }L^{1}((0,T)\times\T).
\end{aligned}
\end{equation} 
Moreover, $(\rho, u)$ is a finite energy weak solutions of \eqref{eq:qns1}-\eqref{eq:qns2}-\eqref{eq:id}.
\end{theorem}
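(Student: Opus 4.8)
The plan is to exploit the effective velocity $\wn=\un+c\nabla\log\rn$ introduced before the statement: for a constant $c$ depending on $\nu$ and $k$, the pair $(\rn,\wn)$ solves a compressible Navier--Stokes system with degenerate viscosity in which the third order dispersive term has been reabsorbed, the effective capillarity coefficient being tuned to zero. The condition $k<\nu$ is exactly what guarantees that such a $c$ exists with the residual viscosity still positive, so that the reformulated system dissipates energy. Once this reduction is made, I would first collect the uniform a priori bounds. The energy inequality provides $\rn$ bounded in $L^\infty(0,T;L^1\cap L^\gamma(\T))$, $\rrn\un$ bounded in $L^\infty(0,T;L^2)$, $\nabla\rrn$ bounded in $L^\infty(0,T;L^2)$ and $\rrn D\un$ bounded in $L^2$; the Bresch--Desjardins entropy, which is the natural energy of the effective system, upgrades these to $\nabla\rn^{\gamma/2}\in L^2$ and, crucially, $\rrn\in L^2(0,T;H^2(\T))$ coming from the $\int\rn|\nabla^2\log\rn|^2$ dissipation.

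First I would treat the density. From the continuity equation $\pt\rn=-\dive(\rrn\,\rrn\un)$ one controls $\pt\rrn$ in a negative Sobolev space, while the entropy gives $\rrn$ bounded in $L^2(0,T;H^2)$. An Aubin--Lions argument then yields $\rrn\to\rrho$ strongly in $L^2(0,T;H^1(\T))$, which is precisely the first convergence in \eqref{eq:main}, namely $\nabla\rrn\to\nabla\rrho$ strongly in $L^2$. Strong $L^2$ convergence of $\rrn$ together with the uniform $L^\gamma$ bound also gives $\rn\to\rho$ a.e.\ and, by interpolation and equi-integrability, $\rn^\gamma\to\rho^\gamma$ strongly in $L^1$, the third convergence in \eqref{eq:main}.

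The core of the argument is the momentum. On the effective system I would prove a Mellet--Vasseur type estimate, namely a uniform bound for $\rn\big(1+\tfrac{|\wn|^2}{2}\big)\log\big(1+\tfrac{|\wn|^2}{2}\big)$ in $L^\infty(0,T;L^1)$; the hypotheses \eqref{eq:hyidr}--\eqref{eq:hyidu}, through \eqref{eq:mvi}, are designed exactly to bound this quantity at $t=0$. This estimate is available for $(\rn,\wn)$ precisely because the dispersive third order term, which obstructs the Mellet--Vasseur computation in the original variables, has been eliminated. It yields equi-integrability of $\rn|\wn|^2$, and hence of $\rn|\un|^2$. Next I would extract compactness of the effective momentum $\rn\wn$ from its evolution equation, whose time derivative lies in a negative Sobolev space by the a priori bounds, deduce a.e.\ convergence of $\rrn\wn$, and combine it with the equi-integrability via Vitali's theorem to get $\rrn\wn\to\rrho\,w$ strongly in $L^2$. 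Since $\rrn\wn=\rrn\un+2c\nabla\rrn$ and $\nabla\rrn$ already converges strongly, this transfers to $\rrn\un\to\rrho u$ strongly in $L^2$, the second convergence in \eqref{eq:main}.

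It remains to pass to the limit in Definition \ref{def:ws}. The convective term $\rrn\un\otimes\rrn\un$ and the viscous cross terms $\rrn\un\otimes\nabla\rrn$ converge in $L^1$ because each factor converges strongly in $L^2$; the dispersive contributions $\nabla\rrn\otimes\nabla\rrn$ and $\rn\,\Delta\nabla\psi$ converge thanks to the strong convergence of $\nabla\rrn$ and of $\rn$; the pressure converges by the $L^1$ convergence of $\rn^\gamma$. Lower semicontinuity of the convex functionals in $E(t)$ together with weak convergence of $\rrn D\un$ gives the energy inequality in the limit. I expect the main obstacle to be the derivation of the Mellet--Vasseur bound for $\wn$ together with the identification of the limit momentum in the vacuum region $\{\rho=0\}$: one must show that the a.e.\ limit of $\rrn\un$ has the form $\rrho u$ with $u$ defined only where $\rho>0$, and that no mass concentrates at vacuum, which is exactly what the logarithmic refinement in the Mellet--Vasseur estimate prevents. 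The restrictions $k<\nu$ and $\gamma<3$ enter precisely here, the former to make the effective reformulation admissible and the latter to close the integrability needed in the Mellet--Vasseur estimate and in the limit of the pressure.
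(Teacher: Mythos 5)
Your proposal is correct and follows essentially the same route as the paper: the effective velocity $\wn=\un+\mu\nabla\log\rn$ with $\mu=\nu-\sqrt{\nu^2-\kappa^2}$ to cancel the dispersive term, the BD entropy and the $L^2_tH^2_x$ bound on $\rrn$, the Mellet--Vasseur estimate for $\wn$ (where $\gamma<3$ enters), Aubin--Lions for $\rrn$ and the momentum, and the truncation/equi-integrability argument at vacuum to upgrade to strong $L^2$ convergence of $\rrn\wn$ and hence of $\rrn\un$. The only cosmetic difference is that the paper obtains a.e.\ convergence of the momentum by applying Aubin--Lions to $\rn\un$ and then passing to $m_{2,n}=\rn\un+2\mu\rrn\nabla\rrn$, rather than working directly with the equation for $\rn\wn$.
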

\begin{remark}
Let us remark that the limit velocity field $u$ is not uniquely defined in the vacuum region $\{\rho=0\}$.
\end{remark}
\begin{remark}
The smoothness assumption of $(\rho_n,\un)$ does not play any role in the compactness result and it is made in order have a more neat statement. Indeed, the Theorem holds true also by assuming 
$\{(\rn, \un)\}_n$ is a sequence of weak solutions in the sense of Definition \ref{def:ws} satisfying the uniform bounds obtained from Lemma \ref{lem:s1}, Lemma \ref{lem:s3}, Lemma \ref{lem:s3bis} and Lemma \ref{lem:s4} and the assumptions on the initial data \eqref{eq:hyidr} and \eqref{eq:hyidu}.
\end{remark}
\begin{remark}
We stress that while Theorem \ref{teo:main} holds in the case $\kappa<\nu$, the existence result in \cite{AS1} requires that  
$\alpha\nu<\kappa<\nu$ for some $\alpha<1$. However that restriction is required only in the three dimensional case. 
\end{remark}

\section{A priori estimates}\label{sect:apriori}
In this section we prove the a priori estimates needed in the proof of the convergence. The first lemma is the basic energy inequality associated to the (QNS) system. 
\begin{lemma}\label{lem:s1}
Let $(\rn,\un)$ be a smooth solution of \eqref{eq:qns1}-\eqref{eq:qns2}, then 
\begin{equation}\label{eq:en}
\frac{d}{dt}\left(\int\rho\frac{|\un|^2}{2}+\frac{\rn^{\gamma}}{\gamma-1}+2{\kappa}^2|\nabla\rrn|^2\,dx\right)+2\nu\int\rn|D\un|^2\,dx=0.
\end{equation}
\end{lemma}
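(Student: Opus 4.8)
The plan is to derive the energy identity \eqref{eq:en} directly from the equations \eqref{eq:qns1}--\eqref{eq:qns2} by testing the momentum equation against the velocity field $\un$ and carefully integrating by parts, using the continuity equation to handle the time-derivative and convective terms. Since $(\rn, \un)$ is assumed smooth with $\rn > 0$, all manipulations are legitimate and no vacuum subtleties arise at this stage. First I would rewrite the momentum balance in the non-conservative form by expanding $\pt(\rn\un) + \dive(\rn\un\otimes\un)$ and using \eqref{eq:qns1} to cancel the terms proportional to $\pt\rn + \dive(\rn\un)$, obtaining the advective form $\rn(\pt\un + \un\cdot\nabla\un)$. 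Dotting with $\un$ and integrating over $\T$ then produces, after using the continuity equation once more, the clean kinetic-energy rate $\frac{d}{dt}\int \rn\frac{|\un|^2}{2}\,dx$.

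Next I would treat the three remaining forcing terms. For the pressure $\nabla\rn^\gamma$, pairing with $\un$ and integrating by parts gives $\int \rn^\gamma\dive\un\,dx$; combining this with the continuity equation $\pt\rn + \dive(\rn\un)=0$ yields exactly $\frac{d}{dt}\int\frac{\rn^\gamma}{\gamma-1}\,dx$, which is the standard internal-energy computation. For the viscous term $-2\nu\dive(\rn D\un)$, integration by parts produces $2\nu\int \rn D\un : \nabla\un\,dx$, and by the symmetry of $D\un$ this equals the dissipation $2\nu\int\rn|D\un|^2\,dx$, matching the stated nonnegative contribution on the left-hand side of \eqref{eq:en}. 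The main work is the dispersive term $-2\kappa^2\rn\nabla(\Delta\rrn/\rrn)$: here I would test against $\un$, integrate by parts, and show that the result reduces to a total time derivative $\frac{d}{dt}\int 2\kappa^2|\nabla\rrn|^2\,dx$ modulo terms that cancel using the continuity equation.

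The hard part will be the dispersive term, and I expect it to be handled most cleanly via the Bohm-potential identity \eqref{eq:quantum}, namely $2\rn\nabla(\Delta\rrn/\rrn) = \nabla\Delta\rn - 4\dive(\nabla\rrn\otimes\nabla\rrn)$, together with a careful bookkeeping of the transport of the quantum energy $2\kappa^2|\nabla\rrn|^2$ along the flow. The natural route is to differentiate $\frac{d}{dt}\int 2\kappa^2|\nabla\rrn|^2\,dx$ in time, substitute $\pt\rrn$ obtained from the continuity equation (recall $\pt\rrn = -\frac12\rrn\,\dive\un - \un\cdot\nabla\rrn$ since $\rn>0$), and verify that the resulting expression exactly matches the contribution of the dispersive force when tested against $\un$. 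The key structural point is that the third-order dispersive term is conservative: its energy contribution is a pure time derivative with no sign-definite dissipation, in contrast to the viscous term. I would organize the computation so that all convective and $\dive\un$-type terms generated by the three separate contributions cancel pairwise through systematic use of \eqref{eq:qns1}, leaving precisely the energy balance \eqref{eq:en}. Boundary terms are absent throughout because $\Omega=\T$ is periodic.
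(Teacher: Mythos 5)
Your proposal is correct and is exactly the standard energy computation: the paper itself gives no details and simply cites J\"ungel \cite{J}, where the identity is derived in precisely this way (testing the momentum equation with $\un$, using the continuity equation for the kinetic and internal energy rates, symmetry of $D\un$ for the dissipation, and the conservative structure of the Bohm-potential term for the $2\kappa^2|\nabla\rrn|^2$ contribution). No gaps; for the dispersive term the most direct route is to write $-2\kappa^2\int\rn\un\cdot\nabla(\Delta\rrn/\rrn)\,dx=-2\kappa^2\int\pt\rn\,(\Delta\rrn/\rrn)\,dx=2\kappa^2\frac{d}{dt}\int|\nabla\rrn|^2\,dx$, which realizes your outlined cancellation in one step.
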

\begin{proof}
The proof is straightforward and can be found in \cite{J}.
\end{proof}
As we already said in the Introduction we are now going to consider an effective velocity $w:=u+c\nabla\log\rho$. 
Then \eqref{eq:qns1}-\eqref{eq:qns2} can be transformed in a modified system in terms of the variables $(\rho, w)$, where the viscosity and capillarity coefficients depend on $c$. We can thus choose that constant such that the dispersive term vanishes.
This will allow us to infer the desired a priori estimates and consequently the necessary compactness properties for the weak solutions. A similar approach, i.e. the introduction of an effective velocity for which the modified system has more convenient a priori estimates, was already used in \cite{J} (see also \cite{BD}) to prove global existence of weak solutions to \eqref{eq:qns1}-\eqref{eq:qns2} in the case $\kappa>\nu$, $\gamma>3$.
\newline
Let us also remark that $c\nabla\log\rho$ has the dimensions of a velocity; this is indeed the so-called \emph{osmotic velocity} appearing in the context of Nelson formulation of quantum mechanics, see for instance \cite{Car} and references therein.
\newline
Furthermore, in \cite{BGDD} the complex vector field $w=u+ic\nabla\log\rho$ has been used to reformulate the Euler-Korteweg system as a quasilinear Schr\"odinger equation.
\begin{lemma}\label{lem:s3}
Let $(\rn,\un)$ be a smooth solution of \eqref{eq:qns1}-\eqref{eq:qns2}. Given $\nu>\kappa$ let $\mu=\nu-\sqrt{\nu^2-{\kappa}^{2}}$, $\la=\sqrt{\nu^2-{\kappa}^2}$ and $\wn=\un+\mu\nabla\log\rn$. Then, $(\rn,\wn)$ satisfies 
\begin{align}
&\pt\rn+\dive(\rn\wn)=\mu\Delta\rn, \label{eq:wqns1}\\
&\pt(\rn \wn)+\dive(\rn \wn\otimes \wn)+\nabla\rn^{\gamma}-\mu\Delta(\rn \wn)-2\la\dive(\rn D\wn)=0\label{eq:wqns2}.
\end{align}
\end{lemma}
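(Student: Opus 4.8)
The plan is to treat the statement as a direct computation: substitute $\un=\wn-\mu\nabla\log\rn$ into \eqref{eq:qns1}--\eqref{eq:qns2}, expand every term, and verify that the specific choice of $\mu$ and $\la$ forces all third order terms to cancel. I would first dispatch the continuity equation \eqref{eq:wqns1}. Since $\rn\nabla\log\rn=\nabla\rn$, one has $\rn\wn=\rn\un+\mu\nabla\rn$, hence $\dive(\rn\wn)=\dive(\rn\un)+\mu\Delta\rn$; inserting \eqref{eq:qns1} gives \eqref{eq:wqns1} immediately. I would record its gradient $\pt\nabla\rn=-\nabla\dive(\rn\un)$, which is the only way \eqref{eq:qns1} enters the momentum computation.

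For the momentum equation I would expand the left-hand side of \eqref{eq:wqns2} in terms of $(\rn,\un)$ and $\nabla\rn$. The ingredients are: $\pt(\rn\wn)=\pt(\rn\un)-\mu\nabla\dive(\rn\un)$; the convective tensor $\rn\wn\otimes\wn=\rn\un\otimes\un+\mu\,\un\otimes\nabla\rn+\mu\,\nabla\rn\otimes\un+\mu^2\frac{\nabla\rn\otimes\nabla\rn}{\rn}$; the term $-\mu\Delta(\rn\wn)=-\mu\Delta(\rn\un)-\mu^2\nabla\Delta\rn$; and, using that the symmetric gradient of the gradient field $\nabla\log\rn$ is the Hessian $\nabla^2\log\rn$, the viscous term $-2\la\dive(\rn D\wn)=-2\la\dive(\rn D\un)-2\la\mu\dive(\rn\nabla^2\log\rn)$. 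I would then use \eqref{eq:qns2} itself to replace $\pt(\rn\un)+\dive(\rn\un\otimes\un)+\nabla\rn^{\gamma}$ by $2\nu\dive(\rn D\un)+2\kappa^2\rn\nabla(\Delta\rrn/\rrn)$.

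After this substitution the expression splits into an $\un$-part and a purely density-dependent part. The $D\un$-terms combine into $2(\nu-\la)\dive(\rn D\un)=2\mu\dive(\rn D\un)$, and the remaining first order $\un$-terms, after inserting $\pt\nabla\rn=-\nabla\dive(\rn\un)$ and \eqref{eq:quantum} for $\dive(\rn\nabla^2\log\rn)$, reduce to $\mu$ times $2\dive(\rn D\un)-\nabla\dive(\rn\un)+\dive(\un\otimes\nabla\rn)+\dive(\nabla\rn\otimes\un)-\Delta(\rn\un)$; a short computation in coordinates shows this vanishes identically, every term cancelling in pairs.

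The heart of the matter, and the only genuinely nontrivial step, is the density-only part. Collecting the quantum term $2\kappa^2\rn\nabla(\Delta\rrn/\rrn)$ from \eqref{eq:qns2}, the contribution $-4\la\mu\,\rn\nabla(\Delta\rrn/\rrn)$ coming from $\dive(\rn\nabla^2\log\rn)$, the term $-\mu^2\nabla\Delta\rn$, and $\mu^2\dive\big(\tfrac{\nabla\rn\otimes\nabla\rn}{\rn}\big)$, and rewriting everything through \eqref{eq:quantum} together with $\frac{\nabla\rn\otimes\nabla\rn}{\rn}=4\nabla\rrn\otimes\nabla\rrn$, one is left with the coefficient $\kappa^2-2\la\mu-\mu^2$ in front of $\nabla\Delta\rn$ and $-4\kappa^2+8\la\mu+4\mu^2$ in front of $\dive(\nabla\rrn\otimes\nabla\rrn)$. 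The choice $\mu=\nu-\sqrt{\nu^2-\kappa^2}$, $\la=\sqrt{\nu^2-\kappa^2}$ yields exactly $2\la\mu+\mu^2=(\nu-\la)(\nu+\la)=\nu^2-\la^2=\kappa^2$, so both coefficients vanish and all third order terms disappear, leaving precisely \eqref{eq:wqns2}. I expect the only real obstacle to be bookkeeping: tracking the various second and third order tensors and applying \eqref{eq:quantum} consistently so that the algebraic relation $2\la\mu+\mu^2=\kappa^2$ can be invoked.
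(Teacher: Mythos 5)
Your proposal is correct and follows essentially the same route as the paper: both expand the $w_n$--system in terms of $(\rho_n,u_n)$ via the identities $\pt\nabla\rn=-\nabla\dive(\rn\un)$, $\dive(\un\otimes\nabla\rn+\nabla\rn\otimes\un)=\Delta(\rn\un)-2\dive(\rn D\un)+\nabla\dive(\rn\un)$ and \eqref{eq:quantum}, and both reduce the cancellation of the third order terms to the algebraic condition $\kappa^2-\mu^2-2\la\mu=0$, which the choice $\mu=\nu-\sqrt{\nu^2-\kappa^2}$, $\la=\sqrt{\nu^2-\kappa^2}$ satisfies. The only cosmetic difference is direction of the computation (you verify the $w$-equation by reducing it to \eqref{eq:qns2}, while the paper builds the $w$-equation up from \eqref{eq:qns2}); the content is identical.
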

\begin{proof}
Let $c\in \R$ to be chosen later. Let us consider $\wn=\un+c\nabla\log\rn$. Then,
\begin{equation*}
\pt\rn+\dive(\rn\wn)=\pt\rn+\dive(\rn(\un+c\nabla\log\rn))=c\Delta\rn.
\end{equation*}
To prove equation \eqref{eq:wqns2} we first recall the following elementary identities,
\begin{align*}
c(\rn\nabla\log\rn)_t&=-c\nabla\dive(\rn \un),\\
c\dive(\rn \un\otimes\nabla\log\rn+\rn\nabla\log\rn\otimes \un)&=c\Delta(\rn \un)-2c\dive(\rn D\un)\\&+c\nabla\dive(\rn \un),\\
c^{2}\dive(\rn\nabla\log\rn\otimes\nabla\log\rn)&=c^2\Delta(\rn\nabla\log\rn)\\
&-c^2\dive(\rn\nabla^2\log\rn).
\end{align*}
Moreover, by using \eqref{eq:quantum}
\begin{equation*}
2\rn\nabla\left(\frac{\Delta\rrn}{\rrn}\right)=\dive(\rn\nabla^2\log\rn).
\end{equation*}
By using these identities it is easy to prove that 
\begin{equation*}
\begin{aligned}
\pt(\rn \wn)+\dive(\rn \wn\otimes \wn)+\nabla\rn^{\gamma}&-c\Delta(\rn \wn)=2(\nu\!-\!c)\dive(\rn D\wn)\\
&+(k^2\!-\!c^2\!\!-\!2(\nu\!-\!c)c)\dive(\rn\nabla^2\log\rn).
\end{aligned}
\end{equation*}
Then, by imposing that 
\begin{equation*}
\begin{aligned}
k^2-c^2-2(\nu-c)c&=0,\\
\nu-c>&\,0,
\end{aligned}
\end{equation*}
we get that $c=\mu=\nu^2-\sqrt{\nu^2-k^2}$ and $\nu-c=\la=\sqrt{v^2-k^2}$. 
\end{proof}
Once we have written \eqref{eq:qns1}-\eqref{eq:qns2} in terms of the variable $\wn$ we can perform further a priori estimates.
\begin{lemma}\label{lem:s3bis}
Let $(\rn,\un)$ be a smooth solution of \eqref{eq:qns1}-\eqref{eq:qns2}. Then, $\wn=\un+\mu\nabla\rn$ and $\rn$ satisfy
\begin{multline}\label{eq:bd1}
\frac{d}{dt}\left(\int\rn\frac{|\wn|^2}{2}+\frac{\rn^\gamma}{\gamma-1}\,dx\right)+\\
+\frac{4\mu}{\gamma}\int|\nabla\rn^{\frac{\gamma}{2}}|^2\,dx
+2\la\int\rn| D \wn|^2\,dx+\mu\int\rn|\nabla \wn|^2\,dx=0.
\end{multline}
\end{lemma}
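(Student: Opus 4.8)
The plan is to carry out the natural energy estimate for the reformulated system \eqref{eq:wqns1}--\eqref{eq:wqns2}: multiply the momentum equation \eqref{eq:wqns2} by $\wn$ and integrate over $\T$, using the modified continuity equation \eqref{eq:wqns1} to rewrite the transport terms. Since $(\rn,\wn)$ is smooth and $\rn>0$ on the torus, all integrations by parts are justified with no boundary contributions, so the argument is essentially bookkeeping. I would treat the four groups of terms separately.

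First, the convective part $\int_\T[\pt(\rn\wn)+\dive(\rn\wn\otimes\wn)]\cdot\wn\,dx$. Expanding it and using \eqref{eq:wqns1} in the form $\pt\rn+\dive(\rn\wn)=\mu\Delta\rn$, I expect the material-derivative structure to collapse to $\frac{d}{dt}\int_\T\rn\frac{|\wn|^2}{2}\,dx$ plus a single leftover term $\frac{\mu}{2}\int_\T|\wn|^2\Delta\rn\,dx$ coming from the diffusion in the mass equation; this leftover is the crucial object to keep track of.

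Second, the pressure term $\int_\T\nabla\rn^\gamma\cdot\wn\,dx$. Here I would argue in reverse: differentiating the internal energy and substituting \eqref{eq:wqns1} shows that $\frac{d}{dt}\int_\T\frac{\rn^\gamma}{\gamma-1}\,dx$ equals $\int_\T\nabla\rn^\gamma\cdot\wn\,dx$ minus a term proportional to $\int_\T\rn^{\gamma-1}\Delta\rn\,dx$. Integrating the latter by parts and using the identity $\rn^{\gamma-2}|\nabla\rn|^2=\frac{4}{\gamma^2}|\nabla\rn^{\gamma/2}|^2$ produces exactly the dissipation $\frac{4\mu}{\gamma}\int_\T|\nabla\rn^{\gamma/2}|^2\,dx$. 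In other words, this extra entropy dissipation is precisely the signature of the $\mu\Delta\rn$ regularization of the continuity equation, which is what promotes the plain energy identity of Lemma \ref{lem:s1} to a BD-type estimate.

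Third and fourth, the two diffusion operators. Integrating $-\mu\int_\T\Delta(\rn\wn)\cdot\wn\,dx$ by parts and expanding $\nabla(\rn\wn)$ should yield $\mu\int_\T\rn|\nabla\wn|^2\,dx$ together with $-\frac{\mu}{2}\int_\T|\wn|^2\Delta\rn\,dx$, and the latter cancels exactly the leftover from the convective part. Similarly $-2\la\int_\T\dive(\rn D\wn)\cdot\wn\,dx$ integrates by parts to $2\la\int_\T\rn D\wn:\nabla\wn\,dx=2\la\int_\T\rn|D\wn|^2\,dx$, using that $D\wn$ is symmetric. Collecting the four contributions gives \eqref{eq:bd1}. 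The only genuinely delicate point is the exact cancellation of the two $\int_\T|\wn|^2\Delta\rn\,dx$ terms; everything else is routine algebra and integration by parts.
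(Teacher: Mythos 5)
Your proposal is correct and follows essentially the same route as the paper: multiply \eqref{eq:wqns2} by $\wn$ and integrate to obtain the kinetic part, then combine with the identity obtained by multiplying \eqref{eq:wqns1} by $\gamma\rn^{\gamma-1}/(\gamma-1)$, whose $\mu\Delta\rn$ contribution produces the $\frac{4\mu}{\gamma}\int|\nabla\rn^{\gamma/2}|^2\,dx$ term after integration by parts. Your explicit tracking of the two $\frac{\mu}{2}\int|\wn|^2\Delta\rn\,dx$ terms and their cancellation is just the detail the paper leaves implicit in deriving \eqref{eq:s4}, and your computations there are accurate.
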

\begin{proof}
Estimate \eqref{eq:bd1} is nothing but the energy estimate associated to system \eqref{eq:wqns1}-\eqref{eq:wqns2}.
By multiplying \eqref{eq:wqns2} by $w_n$, by integrating in space and by using \eqref{eq:wqns1} we get 
\begin{equation}\label{eq:s4}
\frac{d}{dt}\int\rn\frac{|\wn|^2}{2}\,dx+\int\nabla\rn^{\gamma} \wn\,dx+2\la\int\rn|D \wn|^2\,dx+\mu\int\rn|\nabla \wn|^2\,dx=0.
\end{equation}
Then, we multiply the \eqref{eq:wqns1} by $\gamma\rn^{\gamma-1}/(\gamma-1)$ and by integrating by parts we get 
\begin{equation}\label{eq:s5}
\frac{d}{dt}\int\frac{\rn^{\gamma}}{\gamma-1}\,dx-\int\nabla\rn^{\gamma} \wn\,dx-\mu\gamma\int\Delta\rn\frac{\rn^{\gamma-1}}{\gamma-1}\,dx=0.
\end{equation}
By integrating by part the last term and by summing up \eqref{eq:s4} and \eqref{eq:s5} we get \eqref{eq:bd1}.
\end{proof}
The next a priori estimate is the key tool of the compactness analysis. It was first discovered by Mellet and Vasseur in \cite{MV}. 
\begin{lemma}\label{lem:s4}
Let $(\rn,\un)$ be a smooth solution of \eqref{eq:qns1}-\eqref{eq:qns2}. Then, $\wn=\un+\mu\nabla\rn$ and $\rn$ satisfy
\begin{equation}\label{eq:mv}
\begin{aligned}
&\frac{d}{dt}\int\rn\left(1+\frac{|\wn|^2}{2}\right)\log\left(1+\frac{|\wn|^2}{2}\right)\,dx+\mu\int\rn|\nabla \wn|^2\,dx\\
&+\mu\int\rn|\nabla \wn|^2\log\left(1+\frac{|\wn|^2}{2}\right)\,dx+2\la\int\rn|D\wn|^2\,dx\\
&+2\la\int\rn|D\wn|^2\log\left(1+\frac{|\wn|^2}{2}\right)\,dx+\mu\int\rn\left|\nabla\frac{|\wn|^2}{2}\right|^2\frac{2}{2+|\wn|^2}\,dx\\
&=-2\la\int\rn D\wn \wn\nabla \wn \wn\frac{2}{2+|\wn|^2}\,dx\\
&-\int\rn^\gamma\dive\wn\left(1+\log\left(1+\frac{|\wn|^2}{2}\right)\right)\,dx\\
&-\int\rn^\gamma\wn\nabla \wn \wn\frac{2}{2+|\wn|^2}\,dx.
\end{aligned}
\end{equation}
\end{lemma}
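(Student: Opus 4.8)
My plan is to run a Mellet--Vasseur argument on the reformulated system \eqref{eq:wqns1}--\eqref{eq:wqns2}, testing the momentum balance against a suitable nonlinear multiplier of the effective velocity. I set $\beta(s)=(1+s)\log(1+s)$, so that $\beta'(s)=1+\log(1+s)$ and $\beta''(s)=\tfrac{1}{1+s}$; taking $s=\tfrac{|\wn|^2}{2}$ one has $\bedp=\tfrac{2}{2+|\wn|^2}$, which already explains the weights in \eqref{eq:mv}. The first step is to recast \eqref{eq:wqns2} so that the material-derivative structure is visible. Expanding $\Delta(\rn\wn)=\rn\Delta\wn+2\nabla\rn\cdot\nabla\wn+\wn\Delta\rn$ and using \eqref{eq:wqns1} in the form $\pt\rn+\dive(\rn\wn)-\mu\Delta\rn=0$ to annihilate the coefficient of $\wn$, the momentum equation becomes
\begin{equation*}
\rn\big(\pt\wn+\wn\cdot\nabla\wn-\mu\Delta\wn\big)-2\mu\nabla\rn\cdot\nabla\wn+\nabla\rn^\gamma-2\la\dive(\rn D\wn)=0.
\end{equation*}
I would then take the scalar product of this identity with $\bep\,\wn$ and integrate over $\T$, treating the five resulting blocks one at a time.

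For the convective block I use the pointwise identity $\bep\,\wn\cdot(\pt+\wn\cdot\nabla)\wn=(\pt+\wn\cdot\nabla)\be$ together with the transport rule $\int\rn(\pt+\wn\cdot\nabla)f\,dx=\tfrac{d}{dt}\int\rn f\,dx-\mu\int f\Delta\rn\,dx$, which is itself a consequence of \eqref{eq:wqns1}; this yields $\tfrac{d}{dt}\int\rn\be\,dx$ together with a remainder $-\mu\int\be\Delta\rn\,dx$. For the momentum diffusion I integrate $-\mu\int\rn\bep\,\wn\cdot\Delta\wn\,dx$ by parts; distributing the gradient across $\rn\bep\wn$ and using $\wn^i\partial_k\wn^i=\partial_k s$ produces exactly the Fisher-type term $\mu\int\rn\bep|\nabla\wn|^2\,dx$ (i.e.\ the first two dissipation terms of \eqref{eq:mv}), the term $\mu\int\rn\bedp\big|\nabla\tfrac{|\wn|^2}{2}\big|^2\,dx$, and a cross term $\mu\int\bep\,\nabla\rn\cdot\nabla\tfrac{|\wn|^2}{2}\,dx$. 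The remaining first-order term contributes $-2\mu\int\bep\,\nabla\rn\cdot\nabla\tfrac{|\wn|^2}{2}\,dx$.

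The crucial observation is that the three contributions carrying $\nabla\rn$ cancel exactly: integrating the remainder $-\mu\int\be\Delta\rn\,dx$ by parts gives $+\mu\int\bep\,\nabla\rn\cdot\nabla\tfrac{|\wn|^2}{2}\,dx$, which together with the cross term from the Laplacian and the $-2\mu$ term sums to zero. This is precisely the algebraic miracle that the choice of $\mu,\la$ in Lemma \ref{lem:s3} (killing the capillarity coefficient) is engineered to produce, so that the modified equation behaves like a degenerate Navier--Stokes system rather than a third-order one. It remains to treat the last two blocks. For the viscous term I integrate $-2\la\int\bep\,\wn\cdot\dive(\rn D\wn)\,dx$ by parts to get $2\la\int\rn D\wn:\nabla(\bep\wn)\,dx$; by symmetry of $D\wn$ the $\bep$-part gives $2\la\int\rn\bep|D\wn|^2\,dx$ (the two $D\wn$ dissipation terms) while the $\bedp$-part gives the first right-hand side term of \eqref{eq:mv}. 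The pressure term $\int\nabla\rn^\gamma\cdot\bep\wn\,dx$, integrated by parts as $-\int\rn^\gamma\dive(\bep\wn)\,dx$, yields the two remaining right-hand side terms involving $\rn^\gamma$. Collecting everything gives \eqref{eq:mv}.

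I expect the main obstacle to be precisely the bookkeeping of the $\nabla\rn$ cross terms and verifying their complete cancellation, since this is what separates the tractable degenerate-viscosity behaviour from the genuinely dispersive one and requires the exact values $\mu=\nu-\sqrt{\nu^2-\kappa^2}$, $\la=\sqrt{\nu^2-\kappa^2}$. A secondary, more routine point is that the whole computation is a formal a priori identity: every integration by parts must be justified on the smooth, strictly positive solutions $(\rn,\wn)$, which is where the regularity hypothesis enters, and no boundary terms arise because $\Omega=\T$.
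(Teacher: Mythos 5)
Your proposal is correct and follows essentially the same route as the paper: both test the effective-velocity momentum equation against $\bep\wn$ with $\beta(s)=(1+s)\log(1+s)$ and rely on the cancellation of the $\Delta\rn$/$\nabla\rn$ cross terms produced by the continuity equation \eqref{eq:wqns1}; the only difference is that the paper keeps the conservative form and computes $-\mu\int\Delta(\rn\wn)\cdot\wn\bep\,dx$ directly, while you first pass to the material-derivative form. One small caveat: the cancellation of the $\nabla\rn$ terms is a structural consequence of the \emph{same} coefficient $\mu$ appearing in front of $\Delta\rn$ and $\Delta(\rn\wn)$, and holds for any such $\mu$; the specific values $\mu=\nu-\sqrt{\nu^2-\kappa^2}$, $\la=\sqrt{\nu^2-\kappa^2}$ are only needed earlier, in Lemma \ref{lem:s3}, to remove the third-order term.
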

\begin{proof}
Let $\beta\in C^{1}(\R)$. By a simple integration by parts we get 
\begin{equation*}
\begin{aligned}
-\mu\int\Delta(\rn \wn)\wn\bep\,dx=&-\mu\int\Delta\rn|\wn|^2\bep\,dx\\
                                              &+\mu\int\Delta\rn\be\,dx\\
                                              &+\mu\int\rn|\nabla \wn|^2\bep\,dx\\
                                             &+\mu\int\rn\left|\nabla\frac{|\wn|^2}{2}\right|^2\bedp\,dx.
\end{aligned}                                             
\end{equation*}
By multiplying \eqref{eq:wqns2} by $\wn\bep$ and by integrating by parts we get 
\begin{equation*}
\begin{aligned}
\frac{d}{dt}&\int\rn\be\,dx+\mu\int\rn|\nabla \wn|^2\bep\\
&+\int\nabla\rn^{\gamma}\wn\bep\,dx+\mu\int\rn\left|\nabla\frac{|\wn|^2}{2}\right|^2\bedp\,dx\\
&-2\la\int\dive(\rn D\wn)\wn\bep\,dx.
\end{aligned}
\end{equation*}
By integrating by parts the last two terms we get 
\begin{equation*}
\begin{aligned}
\frac{d}{dt}&\int\rn\be\,dx+\mu\int\rn|\nabla \wn|^2\bep\,dx\\
&+\mu\int\rn\left|\nabla\frac{|\wn|^2}{2}\right|^2\bedp\,dx+2\la\int\rn |D\wn|^2\bep\,dx\\
=&-2\la\int\rn D\wn \wn\nabla \wn \wn\bedp\,dx+\int\rn^{\gamma}\dive \wn\bep\,dx\\
&+\int\rn^{\gamma}\wn\nabla \wn \wn\bedp\,dx.
\end{aligned}
\end{equation*}
By choosing $\beta(t)=(1+t)\log(1+t)$ we get \eqref{eq:mv}
\end{proof}
\section{Proof of the Theorem \ref{teo:main}}\label{sect:main}
In this Section we are going to prove the main result of our paper. 
The most important part of the proof will be the strong convergence stated in \eqref{eq:main}; the fact that the limit is a weak solution to \eqref{eq:qns1}-\eqref{eq:qns2} in the sense of Definition \ref{def:ws} will then be a consequence of the strong convergence.\\

{\bf Bounds independent on $n$}\\
\\
First of all we resume the a priori estimates we can infer from the previous Section, given the assumptions \eqref{eq:hyidr}, \eqref{eq:hyidu} we have on the initial data. By the energy estimates in Lemma \ref{lem:s1} and \ref{lem:s3bis} we have
\begin{equation}\label{eq:c1}
\begin{aligned}
\|\sqrt{\rho_n}u_n\|_{L^\infty_tL^2_x}\leq C, &\;\|\nabla\sqrt{\rho_n}\|_{L^\infty_tL^2_x}\leq C\\
\|\rho_n\|_{L^\infty_t(L^1_x\cap L^\gamma_x)}\leq C,&\; \|\sqrt{\rho_n}\nabla u_n\|_{L^2_{t, x}}\leq C\\
\|\sqrt{\rho_n}w_n\|_{L^\infty_tL^2_x}\leq C,&\;\|\sqrt{\rho_n}\nabla w_n\|_{L^2_{t, x}}\leq C\\
\|\nabla\rho_n^{\gamma/2}\|_{L^2_{t, x}}\leq C.
\end{aligned}
\end{equation}
In what follows we will also need the following uniform bounds
\begin{equation}\label{eq:h2rho}
\|\sqrt{\rho_n}\|_{L^2_tH^2_x}\leq C
\end{equation}
and 
\begin{equation}\label{eq:pressure}
\|\rho_n^\gamma\|_{L^{5/3}_{t, x}}\leq C.
\end{equation}
The first one comes from the fact that
\begin{equation*}
\|\nabla^2\sqrt{\rho_n}\|_{L^2}\lesssim\|\sqrt{\rho_n}\nabla^2\log\sqrt{\rho_n}\|_{L^2},
\end{equation*}
see for example Lemma 6 in \cite{JM} or Lemma 2.1 in \cite{VY2} for a proof, and from the conservation of total mass. For the second one we use the bound $\|\nabla\rho_n^{\gamma/2}\|_{L^2_{t, x}}\leq C$. The Sobolev embedding then implies that $\{\rho_n^\gamma\}\subset L^1_tL^3_x$ is uniformly bounded. By interpolating this with the bound $\|\rho_n^\gamma\|_{L^\infty_tL^1_x}\leq C$ we then have \eqref{eq:pressure}.
By using those bounds we are now able to prove the Mellet-Vasseur type setimate for $(\rho_n, w_n)$ solutions to \eqref{eq:wqns1}-\eqref{eq:wqns2}.
\begin{remark}
Let us remark that \eqref{eq:h2rho} cannot help us improving the bound \eqref{eq:pressure} on the pressure. This in turn implies that to control the left-hand side of \eqref{eq:mv} we still need the restriction $\gamma<3$ as in \cite{MV}. 
\end{remark}
\begin{lemma}\label{lem:c1}
Let $\rn^0$ and $\un^0$ satisfy \eqref{eq:hyidr}, \eqref{eq:hyidu}. Then, there exists a constant $C>0$ independent on $n$ such that 
\begin{equation}\label{eq:mvn}
\sup_t\int\rn\left(1+\frac{|\wn|^2}{2}\right)\log\left(1+\frac{|\wn|^2}{2}\right)\,dx\leq C.
\end{equation}
\end{lemma}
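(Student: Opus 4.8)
The plan is to integrate the differential identity \eqref{eq:mv} of Lemma \ref{lem:s4} in time and to show that its right-hand side is controlled, uniformly in $n$, by the initial bound \eqref{eq:mvi}, the a priori estimates \eqref{eq:c1}, the pressure bound \eqref{eq:pressure}, and a small multiple of the nonnegative dissipative terms appearing on the left-hand side. Writing $F_n(t)=\int\rn(1+\tfrac{|\wn|^2}{2})\log(1+\tfrac{|\wn|^2}{2})\,dx$ and integrating over $[0,t]$, the left-hand side becomes $F_n(t)$ plus the time-integrals of the five nonnegative quantities $\mu\int\rn|\nabla\wn|^2$, $\mu\int\rn|\nabla\wn|^2\log(1+\tfrac{|\wn|^2}{2})$, $2\la\int\rn|D\wn|^2$, $2\la\int\rn|D\wn|^2\log(1+\tfrac{|\wn|^2}{2})$ and $\mu\int\rn|\nabla\tfrac{|\wn|^2}{2}|^2\tfrac{2}{2+|\wn|^2}$. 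Since these are all $\geq0$ they may be kept on the left to absorb part of the right-hand side, and $F_n(0)$ is bounded uniformly by hypothesis \eqref{eq:mvi} (with $c=\mu$). It then remains to estimate the three terms on the right of \eqref{eq:mv}.

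The recurring algebraic device will be the pointwise bound $\tfrac{2|\wn|^2}{2+|\wn|^2}\leq 2$, which removes the quartic weight in $\wn$. For the first right-hand term this yields $\big|2\la\int\rn D\wn\,\wn\,\nabla\wn\,\wn\tfrac{2}{2+|\wn|^2}\big|\lesssim\int\rn|D\wn|\,|\nabla\wn|\leq\int\rn|\nabla\wn|^2$, whose time-integral is bounded by \eqref{eq:c1}. For the non-logarithmic pressure contributions (the third term, and the ``$1$'' part of the second term) I would split $\rn^\gamma|\nabla\wn|=\rn^{\gamma-1/2}\,(\rrn|\nabla\wn|)$ and apply Cauchy-Schwarz together with Young, producing $\int\rn|\nabla\wn|^2$, controlled by \eqref{eq:c1}, and $\int\rn^{2\gamma-1}$. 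The latter is uniformly integrable in space-time because $\gamma\leq 2\gamma-1\leq\tfrac{5\gamma}{3}$ for $1<\gamma<3$, so it is bounded by interpolating \eqref{eq:pressure} with $\rn^\gamma\in L^\infty_tL^1_x$; this is where the restriction $\gamma<3$ first enters.

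The main obstacle is the logarithmic pressure term $\int\rn^\gamma\dive\wn\,\log(1+\tfrac{|\wn|^2}{2})$, which cannot be bounded directly since no a priori control on the log-weighted dissipation $\int\rn|\nabla\wn|^2\log(1+\tfrac{|\wn|^2}{2})$ is available: that quantity is itself generated on the left of \eqref{eq:mv}. I would therefore distribute the (nonnegative) logarithm evenly and use Cauchy-Schwarz,
\begin{equation*}
\int\rn^\gamma|\nabla\wn|\log\Big(1+\tfrac{|\wn|^2}{2}\Big)\leq\Big(\int\rn^{2\gamma-1}\log\Big(1+\tfrac{|\wn|^2}{2}\Big)\Big)^{1/2}\Big(\int\rn|\nabla\wn|^2\log\Big(1+\tfrac{|\wn|^2}{2}\Big)\Big)^{1/2},
\end{equation*}
followed by Young with a small parameter $\epsilon$, so that the resulting $\epsilon\int\rn|\nabla\wn|^2\log(1+\tfrac{|\wn|^2}{2})$ is absorbed into the left-hand side upon choosing $\epsilon<\mu$.

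It then remains to bound $\int\rn^{2\gamma-1}\log(1+\tfrac{|\wn|^2}{2})$ uniformly in space-time, and here I would exploit the slow growth of the logarithm via $\log(1+s)\leq C_\delta(1+s)^\delta$ for arbitrarily small $\delta>0$. This gives $\rn^{2\gamma-1}\log(1+\tfrac{|\wn|^2}{2})\lesssim_\delta \rn^{2\gamma-1-\delta}\big(\rn(1+\tfrac{|\wn|^2}{2})\big)^\delta$, which Young splits into $\delta\,\rn(1+\tfrac{|\wn|^2}{2})$ — bounded in $L^\infty_tL^1_x$ by mass conservation and the energy bound $\rrn\wn\in L^\infty_tL^2_x$ from \eqref{eq:c1} — and a pure power $\rn^{(2\gamma-1-\delta)/(1-\delta)}$, whose exponent tends to $2\gamma-1<\tfrac{5\gamma}{3}$ as $\delta\to0$ and is therefore uniformly integrable for $\delta$ small, precisely when $\gamma<3$. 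Collecting all the estimates and choosing $\epsilon<\mu$ to absorb the single log-weighted remainder, one obtains $F_n(t)\leq F_n(0)+C\leq C$ uniformly in $t$ and $n$, which is \eqref{eq:mvn}; no Gronwall argument is needed, the conclusion following purely by absorption.
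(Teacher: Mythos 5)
Your proposal is correct and follows essentially the same route as the paper's proof: integrate \eqref{eq:mv} in time, absorb the log-weighted dissipation into the left-hand side via Cauchy--Schwarz and Young with a small parameter, and control the remaining term $\iint\rn^{2\gamma-1}\bigl(1+\log(1+\tfrac{|\wn|^2}{2})\bigr)\,dx\,dt$ by exploiting the slow growth of the logarithm together with the energy bounds \eqref{eq:c1} and the pressure estimate \eqref{eq:pressure}, which is exactly where $\gamma<3$ enters. The only cosmetic difference is that you use the pointwise bound $\log(1+s)\le C_\delta(1+s)^\delta$ followed by Young's inequality where the paper applies H\"older in space with exponents $\tfrac{2}{2-\delta}$ and $\tfrac{2}{\delta}$; the two are equivalent.
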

\begin{proof}
The proof follows the same line of Lemma 4.3 in \cite{MV} and we sketch it just for sake of completeness. For convenience of the reader we write again the identity derived in Lemma \ref{lem:s4}. 
\begin{equation}\label{eq:mvn1}
\begin{aligned}
&\frac{d}{dt}\int\rn\left(1+\frac{|\wn|^2}{2}\right)\log\left(1+\frac{|\wn|^2}{2}\right)\,dx\\
&+\mu\int\rn|\nabla \wn|^2\,dx+\mu\int\rn|\nabla \wn|^2\log\left(1+\frac{|\wn|^2}{2}\right)\,dx\\
&+2\la\int\rn|D\wn|^2\,dx+2\la\int\rn|D\wn|^2\log\left(1+\frac{|\wn|^2}{2}\right)\,dx\\
&+\mu\int\rn\left|\nabla\frac{|\wn|^2}{2}\right|^2\frac{2}{2+|\wn|^2}\,dx\\
&=-2\la\int\rn D\wn \wn\nabla \wn \wn\frac{2}{2+|\wn|^2}\,dx\\
&-\int\rn^\gamma\dive\wn\left(1+\log\left(1+\frac{|\wn|^2}{2}\right)\right)\,dx\\
&-\int\rn^\gamma\wn\nabla \wn \wn\frac{2}{2+|\wn|^2}\,dx.
\end{aligned}
\end{equation}
First, we note that the following estimate holds:
\begin{equation}\label{eq:a}
\begin{aligned}
&-2\la\iint\rn D\wn \wn\nabla \wn \wn\frac{2}{2+|\wn|^2}\,dxdt\\
&-\iint\rn^\gamma\dive\wn\left(1+\log\left(1+\frac{|\wn|^2}{2}\right)\right)\,dxdt\\
&-\iint\rn^\gamma\wn\nabla \wn \wn\frac{2}{2+|\wn|^2}\,dxdt\leq C\iint\rn|\nabla\wn|^2\,dxdt\\
&+C\iint\rn^\gamma|\nabla\wn|\left(1+\log\left(1+\frac{|\wn|^2}{2}\right)\!\!\right)\,dxdt\\
&\leq \frac{C}{2\mu}\iint\rn^{2\gamma-1}\left(1+\log\left(1+\frac{|\wn|^2}{2}\right)\!\!\right)\,dxdt\\
&+C\iint\rn|\nabla\wn|^2\,dxdt+\frac{\mu}{2}\iint\rho|\nabla\wn|^2\left(1+\log\left(1+\frac{|\wn|^2}{2}\right)\!\!\right)\,dxdt,
\end{aligned}
\end{equation}
where the last estimate follows from Cauchy-Schwartz and Young inequality. Now, by integrating in time \eqref{eq:mvn1} and by using the apriori bounds in \eqref{eq:c1} we have
\begin{equation*}
\begin{aligned}
&\sup_t\int\rn\left(1+\frac{|\wn|^2}{2}\right)\log\left(1+\frac{|\wn|^2}{2}\right)\,dx\leq\\
&C+\iint\rn^{2\gamma-1}\left(1+\log\left(1+\frac{|\wn|^2}{2}\right)\right)\,dxdt.
\end{aligned}
\end{equation*}
Next, let $\delta\in(0,2)$. By using H\"older inequality we have 
\begin{equation}\label{eq:mv5}
\begin{aligned}
&\iint\rn^{2\gamma-1}\left(1+\log\left(1+\frac{|\wn|^2}{2}\right)\right)\,dxdt\\
&\leq\int\!\!\left(\int\rn^{(2\gamma-\delta/2-1)(2/(2-\delta))}dx\right)^{\frac{2-\delta}{2}}\!\!\left(\int\rn \left(1+\log\left(1+\frac{|\wn|^2}{2}\right)\right)^{\frac{2}{\delta}}\!dx\right)\!dt\\
&\leq C\int\!\!\left(\int\rn^{(2\gamma-\delta/2-1)(2/(2-\delta))}\,dx\right)^{(2-\delta)/2}\,dt.
\end{aligned}
\end{equation}
where in the last inequality we have used the bounds in \eqref{eq:c1}. Finally, by using \eqref{eq:pressure} we have that for $\delta$ small the integral in the last line of \eqref{eq:mv5} is bounded under the condition $\gamma<3$. Then \eqref{eq:mvn} is proved. 
 
\end{proof}

{\bf Convergence}\\
\\
By using the uniform estimate in Lemma \ref{lem:c1} we are now able to prove the strong convergence of $\{\sqrt{\rho_n}\}$ and $\{\sqrt{\rho_n}u_n\}$, as stated in \eqref{eq:main}.
\begin{lemma}\label{lem:5.1}
Let $\{(\rn, \un)\}_n$ be a sequence of solutions of \eqref{eq:qns1}-\eqref{eq:qns2} and $\wn=\un+\mu\nabla\log\rn$. Then up to subsequences there exist a function $\rrho$ and a vector $m_1$ such that 
\begin{align}
&\rrn\rightarrow\rrho\textrm{ strongly in }L^{2}(0,T;H^{1}(\T)),\label{eq:strong1}\\
&\rn\un\rightarrow m_1\textrm{ strongly in }L^{2}(0,T;L^{p}(\T))\textrm{ with }p\in[1,3/2),\label{eq:strong3}\\
&\rn^{\gamma}\rightarrow\rho^{\gamma}\textrm{ strongly in }L^{1}((0,T)\times\T).\label{eq:strong4}
\end{align}
\end{lemma}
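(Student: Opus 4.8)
The plan is to establish strong convergence via the Aubin–Lions–Simon compactness lemma, which requires uniform bounds on the sequences together with equicontinuity in time (i.e. control of the time derivatives in some negative-order space). I would organize the argument around the three target convergences, extracting compactness in the order $\sqrt{\rho_n} \to \sqrt{\rho}$, then $\rho_n u_n \to m_1$, and finally $\rho_n^\gamma \to \rho^\gamma$, since the later convergences lean on the earlier ones.

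First I would handle $\sqrt{\rho_n}$. From \eqref{eq:c1} and \eqref{eq:h2rho} we have $\sqrt{\rho_n}$ bounded in $L^\infty_t H^1_x \cap L^2_t H^2_x$, so the spatial regularity is ample; the key is the time derivative. Using the continuity equation \eqref{eq:qns1} one computes $\partial_t \sqrt{\rho_n} = -\tfrac12 \sqrt{\rho_n}\,\dive u_n - u_n\cdot\nabla\sqrt{\rho_n}$, which I would rewrite in terms of the uniformly bounded quantities $\sqrt{\rho_n}\nabla u_n$ and $\sqrt{\rho_n} u_n$ so as to bound $\partial_t \sqrt{\rho_n}$ in some $L^q_t W^{-1,p}_x$. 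With $\sqrt{\rho_n}$ compact in space (via $H^2 \hookrightarrow\hookrightarrow H^1$ locally, or on $\mathbb{T}^3$ directly) and $\partial_t\sqrt{\rho_n}$ controlled, Aubin–Lions yields \eqref{eq:strong1}. Strong $H^1$ convergence of $\sqrt{\rho_n}$ immediately gives strong convergence of $\rho_n$ in $L^p$ for a range of $p$ and pointwise a.e. convergence along a subsequence.

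For $\rho_n u_n$, I would write $\rho_n u_n = \sqrt{\rho_n}\,(\sqrt{\rho_n} u_n)$, where $\sqrt{\rho_n}u_n$ is bounded in $L^\infty_t L^2_x$; combined with the $L^\gamma$ bound on $\rho_n$ this gives $\rho_n u_n$ bounded in a space like $L^\infty_t L^{3/2}_x$ (the $3/2$ explaining the exponent range in \eqref{eq:strong3}). For the time derivative, I would read off $\partial_t(\rho_n u_n)$ from the momentum equation \eqref{eq:qns2}, estimating each term—convective, pressure, viscous, and dispersive—in a negative Sobolev norm using the uniform bounds in \eqref{eq:c1}, \eqref{eq:h2rho}, \eqref{eq:pressure}. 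Another application of Aubin–Lions then produces a strongly convergent subsequence with limit $m_1$. Alternatively, since $\sqrt{\rho_n}\to\sqrt{\rho}$ strongly and $\sqrt{\rho_n}u_n$ converges weakly, one identifies $m_1 = \sqrt{\rho}\,(\sqrt{\rho}u) = \rho u$ and upgrades the product to strong convergence using the strong convergence of $\sqrt{\rho_n}$.

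The main obstacle, and where the Mellet–Vasseur estimate of Lemma \ref{lem:c1} is essential, is upgrading the weak convergence of $\sqrt{\rho_n}u_n$ to \emph{strong} convergence in $L^2_{t,x}$—without this one cannot pass to the limit in the convective term $\sqrt{\rho_n}u_n\otimes\sqrt{\rho_n}u_n$, nor identify the velocity in the vacuum region. The difficulty is that $u_n$ itself is uncontrolled on $\{\rho_n=0\}$. The resolution is the standard Mellet–Vasseur argument: the uniform bound \eqref{eq:mvn} provides the extra logarithmic integrability of $\rho_n|w_n|^2$, hence (via the BD-type relation $w_n = u_n + \mu\nabla\log\rho_n$ and the $\nabla\sqrt{\rho_n}$ bounds) of $\rho_n|u_n|^2$, which grants equi-integrability of $\sqrt{\rho_n}u_n$. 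Combining this equi-integrability with the a.e. convergence of $\sqrt{\rho_n}$ and the weak $L^2$ limit of $\sqrt{\rho_n}u_n$, a Vitali-type argument yields strong $L^2_{t,x}$ convergence of $\sqrt{\rho_n}u_n$, giving \eqref{eq:strong3} with the correct limit. Finally, \eqref{eq:strong4} follows from the pointwise a.e. convergence of $\rho_n$ together with the uniform $L^{5/3}$ bound \eqref{eq:pressure}, which forces equi-integrability of $\rho_n^\gamma$ and hence, by Vitali, strong $L^1$ convergence.
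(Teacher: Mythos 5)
Your proposal is correct and takes essentially the same route as the paper: \eqref{eq:strong1} and \eqref{eq:strong3} are obtained via Aubin--Lions, bounding $\partial_t\sqrt{\rho_n}$ in $L^2_tH^{-1}_x$ from the continuity equation and $\partial_t(\rho_n u_n)$ term by term in a negative Sobolev space from the momentum equation, while \eqref{eq:strong4} follows from a.e.\ convergence of $\rho_n$ together with the uniform $L^{5/3}$ bound \eqref{eq:pressure}. One caveat: the Mellet--Vasseur estimate is \emph{not} needed for this lemma --- \eqref{eq:strong3} concerns $\rho_n u_n$ in $L^2_tL^p_x$ with $p<3/2$ and follows from the energy and BD bounds \eqref{eq:c1} alone; the logarithmic estimate of Lemma \ref{lem:c1} enters only in the subsequent lemma, where the strong $L^2_{t,x}$ convergence of $\sqrt{\rho_n}u_n$ is established by the truncation argument you sketch at the end.
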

\begin{proof}
Let us consider the continuity equation \eqref{eq:qns1}. By assuming $\rho_n>0$ we may write
\begin{equation}\label{eq:51}
\partial_t\rrn=-\frac{\rrn}{2}\dive\un-\dive(\rrn\un)+\nabla\un\rrn,
\end{equation}
and, by the uniform bounds we have in \eqref{eq:c1}, we have that
\begin{equation*}
\{\partial_t\rrn\}_n\textrm{ is uniformly bounded in }L^{2}(0,T;H^{-1}(\T)).
\end{equation*} 
Then, since $\{\rrn\}_n$ is uniformly bounded in $L^{2}(0,T;H^{2}(\T))$ by using Aubin-Lions Lemma we get \eqref{eq:strong1}. 
To prove \eqref{eq:strong3} we first notice that
\begin{equation}\label{eq:51bis}
\{\nabla(\rn\un)\}_n\textrm{ is uniformly bounded in }L^{2}(0,T;L^{1}(\T)).
\end{equation}
This is again a consequence of the bounds in \eqref{eq:c1}. Let us now consider the equation \eqref{eq:qns2} for the momentum; by using \eqref{eq:quantum} we have
\begin{equation*}
\begin{aligned}
\partial_t(\rn\un)=&-\dive(\rrn\un\otimes\rrn\un)-\nabla p(\rn)-4\kappa^2\dive(\nabla\rrn\otimes\nabla\rrn)\\
&-2\nu\dive(\rrn\un\otimes\nabla\rrn)-2\nu\dive(\nabla\rrn\otimes\rrn\un)\\
&+2\nu\dive(D(\rrn\rrn\un))+\kappa^2\Delta\nabla\rn\\
&=I^{n}_1+I^{n}_2+I^{n}_3+I^{n}_4+I^{n}_5+I^{n}_6+I^{n}_7.
\end{aligned}
\end{equation*}
By using the bounds \eqref{eq:c1} we get for $i=1,...,5$ 
\begin{equation}\label{eq:52}
\{I^{n}_i\}_n\textrm{ is uniformly bounded in }L^{\infty}(0,T;W^{-1,1}(\T)).
\end{equation}                        
Then, by using again \eqref{eq:c1} we have that $\{\diver\symmD(\rho_nu_n)\}\subset L^2(0, T; W^{-2, \frac32})$ is uniformly bounded, and analogously for $\{\Delta\nabla\rho_n\}\subset L^2(0, T; W^{-2, \frac32})$. By the Sobolev embedding $W^{-1, 1}\subset W^{-2, \frac32}$ we then have that
\begin{equation*}
\{\partial_t(\rn\un)\}_n\textrm{ is uniformly bounded in }L^{2}(0,T;W^{-2,\frac{3}{2}}(\T)).
\end{equation*}
Again we apply the Aubin-Lions Lemma and thus \eqref{eq:strong3} is proved.
Finally, the convergence \eqref{eq:strong4} is easily obtained by using \eqref{eq:strong1} and the bound \eqref{eq:pressure}.
\end{proof}
We are now able to prove the strong convergence of $\sqrt{\rho_n}u_n$ in $L^2$, which is the main point in proving Theorem \ref{teo:main}.
\begin{lemma}
Let $(\rn,\un)$ be a sequence of solutions of \eqref{eq:qns1}-\eqref{eq:qns2} and let $\wn=\un+\mu\nabla\log\rho_n$. Then, up to subsequences we have that 
\begin{equation}\label{eq:main2}
\rrn\un\rightarrow\sqrt{\rho}u\textrm{ strongly in }L^{2}((0,T)\times\T),
\end{equation}
where $u$ is defined $m/\rho$ on $\{\rho>0\}$ and $0$ on $\{\rho=0\}$. 
\end{lemma}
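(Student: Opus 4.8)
The plan is to first establish the strong convergence of the effective momentum $\rrn\wn$ in $L^2$ and then transfer it to $\rrn\un$. This is the natural route because all the refined information at our disposal, in particular the Mellet--Vasseur bound \eqref{eq:mvn}, is formulated in terms of $\wn$, and because $(\rn,\wn)$ solves the degenerate system \eqref{eq:wqns1}-\eqref{eq:wqns2}, which contains no third order term; for such a system the classical Mellet--Vasseur compactness argument applies directly.

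First I would record the convergences needed to identify the limit. Since $\rn\wn=\rn\un+2\mu\,\rrn\nabla\rrn$, and $\rn\un\to m_1$ strongly by Lemma \ref{lem:5.1} while $\rrn\nabla\rrn\to\rrho\nabla\rrho$ strongly in $L^1(0,T;L^{3/2}(\T))$ (the product of the strong $L^2_tH^1_x$ convergence \eqref{eq:strong1} with itself, via $H^1\hookrightarrow L^6$), the effective momentum converges strongly, $\rn\wn\to m:=m_1+2\mu\,\rrho\nabla\rrho$. Passing to a further subsequence gives a.e.\ convergence of $\rn$ and of $\rn\wn$, hence, writing $\wn=\rn\wn/\rn$, pointwise convergence $\wn\to w:=m/\rho$ a.e.\ on $\{\rho>0\}$; we set $w=0$ on $\{\rho=0\}$. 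I would also note that $\rrn\wn$ is bounded in $L^{\infty}(0,T;L^2(\T))$ by \eqref{eq:c1}.

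The core is the truncation argument. Fix $M>0$ and a smooth truncation $\gamma_M$ with $\gamma_M(z)=z$ for $|z|\le M$ and $|\gamma_M|\le CM$, and split
\begin{equation*}
\rrn\wn-\rrho w=\rrn(\wn-\gamma_M(\wn))+(\rrn\gamma_M(\wn)-\rrho\gamma_M(w))+\rrho(\gamma_M(w)-w).
\end{equation*}
The outer two terms are controlled uniformly in $n$ by the logarithmic estimate \eqref{eq:mvn}: on $\{|\wn|>M\}$ one has $\log(1+|\wn|^2/2)\ge\log(1+M^2/2)$, whence
\begin{equation*}
\int_0^T\!\!\int\rn|\wn-\gamma_M(\wn)|^2\,dxdt\le\frac{C}{\log(1+M^2/2)},
\end{equation*}
and the same bound holds for the limit term by Fatou. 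The middle term is treated, for fixed $M$, by a generalized dominated convergence argument: $\rrn\gamma_M(\wn)\to\rrho\gamma_M(w)$ a.e.\ (using $\rrn\to\rrho$ a.e.\ and $\wn\to w$ a.e.\ on $\{\rho>0\}$, while both sides vanish on $\{\rho=0\}$ since $\gamma_M$ is bounded and $\rrn\to0$ there), with integrand dominated by $2C^2M^2(\rn+\rho)$, where $\rn\to\rho$ a.e.\ and in $L^1$. Choosing $M$ large and then letting $n\to\infty$ yields $\rrn\wn\to\rrho w$ strongly in $L^2$.

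Finally I would transfer the result to $\un$. Since $\rrn\un=\rrn\wn-2\mu\nabla\rrn$ and $\nabla\rrn\to\nabla\rrho$ strongly in $L^2$ by \eqref{eq:strong1}, we conclude that $\rrn\un\to\rrho w-2\mu\nabla\rrho$ strongly in $L^2$; a short computation on $\{\rho>0\}$ using $m=m_1+2\mu\,\rrho\nabla\rrho$ shows that this limit equals $\rrho\,(m_1/\rho)$ there and vanishes on $\{\rho=0\}$, which matches the stated definition of $u$. The main obstacle is precisely the uniform control of the tails $\{|\wn|>M\}$: this is exactly what the Mellet--Vasseur estimate \eqref{eq:mvn} furnishes, and it is indispensable to prevent a loss of $L^2$ mass in the vacuum region when passing to the limit.
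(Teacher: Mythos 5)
Your proposal is correct and follows essentially the same route as the paper: a.e.\ identification of the limit momenta, the Mellet--Vasseur logarithmic bound \eqref{eq:mvn} (passed to the limit by Fatou) to control the tails $\{|\wn|>M\}$, a truncation-plus-dominated-convergence argument for the bulk, and the transfer from $\wn$ back to $\un$ via the strong $L^2$ convergence of $\nabla\rrn$. The only cosmetic differences are your use of a smooth truncation $\gamma_M$ in place of the sharp cutoff $\chi_{|\wn|\le M}$ and your choice to conclude for $\rrn\wn$ first before subtracting $2\mu\nabla\rrn$, rather than estimating $\rrn\un-\rrho u$ directly.
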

\begin{proof}
From Lemma \ref{lem:5.1} we can extract a further subsequence such that 
\begin{equation}\label{eq:convmom}
\begin{aligned}
&\rrn\rightarrow\rrho\textrm{ a.e. in }(0,T)\times\T,\\
&\nabla\rrn\rightarrow\nabla\rrho\textrm{ a.e. in }(0,T)\times\T,\\
&m_{1,n}=\rn\un\rightarrow m_1\textrm{ a.e. in }(0,T)\times\T.
\end{aligned}
\end{equation}
Then, it follows that 
\begin{equation}\label{eq:convmomw}
m_{2,n}:=m_{1,n}+2\mu\rrn\nabla\rrn\rightarrow m_1+2\mu\rrho\nabla\rrho=:m_2,
\end{equation}
a.e. in $(0,T)\times\T$.
Arguing as in \cite{MV} by using \eqref{eq:c1} and Fatou Lemma we have that 
\begin{equation}
 \iint\liminf_n \frac{m_{1,n}^2}{\rn}\,dxdt\leq \liminf_n\iint \frac{m_{1,n}^2}{\rn}\,dxdt< \infty.
\end{equation}
This implies that $m_{1}=0$ a.e. on $\{\rho=0\}$. Let us define the following limit velocity
\begin{equation*}
u=\left\{
\begin{array}{cc}
\displaystyle{\frac{m_1}{\rho}}  & \textrm{ on }\{\rho>0\}  \\
\\
0  & \textrm{ on }\{\rho=0\}.  \\
\end{array}
\right. 
\end{equation*}
In this way we have that $m_1=\rho u$ and $m_1/\sqrt{\rho}\in L^{\infty}(0,T;L^{2}(\T))$. 
Then from \eqref{eq:convmomw} we have that $m_2=m_1+2\mu\rrho\nabla\rrho$ and since $\nabla\rrho$ is finite almost everywhere we also have that $m_2=0$ on the set $\{\rho=0\}$. This in turn implies that after defining the following limit velocity 
\begin{equation*}
w=\left\{
\begin{array}{cc}
\displaystyle{\frac{m_1}{\rho}+2\mu\frac{\rrho\nabla\rrho}{\rho}}  & \textrm{ on }\{\rho\not=0\}  \\
\\
0  & \textrm{ on }\{\rho=0\},  \\
\end{array}
\right. 
\end{equation*}
we have that $m_2=\rho w$ and 
\begin{equation*}
\frac{m_2}{\rrho}=\rrho u+2\mu\nabla\rrho\in L^{\infty}(0,T;L^{2}(\T)).
\end{equation*}
Now we can prove \eqref{eq:main2}. First, by using \eqref{eq:convmom}, Lemma \ref{lem:c1} and Fatou Lemma we get that 
\begin{equation}\label{eq:mvfinal}
\sup_t\int\rho|w|^2\log\left(1+\frac{|w|^2}{2}\right)\,dx\leq\sup_t\int\rn|\wn|^2\log\left(1+\frac{|\wn|^2}{2}\right)\,dx\leq C.
\end{equation}
Then, we note that for any fixed $M>0$
\begin{equation}\label{eq:convtro}
\rrn\wn\chi_{|\wn|\leq M}\rightarrow\rrho w\chi_{|w|\leq M}
\end{equation}
a.e. in $(0, T)\times\Omega$.
Indeed, in $\{\rho\not=0\}$ it holds
\begin{equation}
\rrn\wn=\frac{m_{1,n}}{\rrn}+\nabla\rrn\rightarrow\frac{m_1}{\rrho}+\nabla\rrho\textrm{   a.e.}
\end{equation} 
While, in $\{\rho=0\}$ we have 
\begin{equation}
|\rrn\wn\chi_{|\wn|<M}|\leq M\rrn\rightarrow 0\textrm{   a.e.}
\end{equation}
Then, 
\begin{equation*}
\begin{aligned}
\iint|\rrn\un-\rrho u|^2\,dxdt&\leq \iint|\rrn\wn-\rrho w|^2\,dxdt+4\mu^2\iint|\nabla\rrn-\nabla\rrho|^2\,dxdt\\
                                 &\leq\iint|\rrn\wn\chi_{|\wn|<M}-\rrho w\chi_{|w|<M}|^2\,dxdt\\
                                 &+2\iint|\rrn\wn|^2\chi_{|\wn|>M}\,dxdt+2\iint|\rrn w|^{2}\chi_{|w|>M}\,dxdt\\
                                 &+4\mu^2\iint|\nabla\rrn-\nabla\rrho|^2\,dxdt\\
                                 &\leq\iint|\rrn\wn\chi_{|\wn|<M}-\rrho w\chi_{|w|<M}|^2\,dxdt\\
                                 &+4\mu^2\iint|\nabla\rrn-\nabla\rrho|^2\,dxdt\\
                                 &+\frac{2}{\log(1+M)}\iint\rn|\wn|^2\log\left(1+\frac{|\wn|^{2}}{2}\right)\,dxdt\\
                                 &+\frac{2}{\log(1+M)}\iint\rho|w|^2\log\left(1+\frac{|w|^2}{2}\right)\,dxdt.
\end{aligned}                                 
\end{equation*}
The first term vanishes by using dominated convergence and \eqref{eq:convtro}, the second term converges to $0$ because of \eqref{eq:strong1} and finally the last two term goes to $0$ by using \eqref{eq:mvfinal} and by sending $M\rightarrow \infty$. 
\end{proof}

\section*{Acknowledgement}
We would like to thank Prof. Pierangelo Marcati for some useful comments.

\end{document}